\newtheorem{theorem}{{\sc Theorem}}[section]
\newtheorem{lemma}[theorem]{{\sc Lemma}}
\def\XXint#1#2#3{{\setbox0=\hbox{$#1{#2#3}{\int}$ }
\vcenter{\hbox{$#2#3$ }}\kern-.6\wd0}}
\newcommand{\Gk}{\kappa}
\newcommand{\Gth}{\theta}
\bmdefine\BGa{\alpha}
\bmdefine\BGb{\beta}
\bmdefine\BGd{\delta}
\bmdefine\BGe{\epsilon}
\bmdefine\BGve{\varepsilon}
\bmdefine\BGf{\phi}
\bmdefine\BGvf{\varphi}
\bmdefine\BGg{\gamma}
\bmdefine\BGc{\chi}
\bmdefine\BGi{\iota}
\bmdefine\BGk{\kappa}
\bmdefine\BGl{\lambda}
\bmdefine\BGn{\eta}
\bmdefine\BGm{\mu}
\bmdefine\BGv{\nu}
\bmdefine\BGp{\pi}
\bmdefine\BGth{\theta}
\bmdefine\BGvth{\vartheta}
\bmdefine\BGr{\rho}
\bmdefine\BGvr{\varrho}
\bmdefine\BGs{\sigma}
\bmdefine\BGvs{\varsigma}
\bmdefine\BGt{\tau}
\bmdefine\BGj{\tau}
\bmdefine\BGu{\upsilon}
\bmdefine\BGo{\omega}
\bmdefine\BGx{\xi}
\bmdefine\BGy{\psi}
\bmdefine\BGz{\zeta}
\bmdefine\BGD{\Delta}
\bmdefine\BGF{\Phi}
\bmdefine\BGG{\Gamma}
\bmdefine\BGL{\Lambda}
\bmdefine\BGP{\Pi}
\bmdefine\BGT{\Theta}
\bmdefine\BGS{\Sigma}
\bmdefine\BGU{\Upsilon}
\bmdefine\BGO{\Omega}
\bmdefine\BGX{\Xi}
\bmdefine\BGY{\Psi}
\bmdefine\BCA{{\mathcal A}}
\bmdefine\BCB{{\mathcal B}}
\bmdefine\BCC{{\mathcal C}}
\bmdefine\BCD{{\mathcal D}}
\bmdefine\BCE{{\mathcal E}}
\bmdefine\BCF{{\mathcal F}}
\bmdefine\BCG{{\mathcal G}}
\bmdefine\BCH{{\mathcal H}}
\bmdefine\BCI{{\mathcal I}}
\bmdefine\BCJ{{\mathcal J}}
\bmdefine\BCK{{\mathcal K}}
\bmdefine\BCL{{\mathcal L}}
\bmdefine\BCM{{\mathcal M}}
\bmdefine\BCN{{\mathcal N}}
\bmdefine\BCO{{\mathcal O}}
\bmdefine\BCP{{\mathcal P}}
\bmdefine\BCQ{{\mathcal Q}}
\bmdefine\BCR{{\mathcal R}}
\bmdefine\BCS{{\mathcal S}}
\bmdefine\BCT{{\mathcal T}}
\bmdefine\BCU{{\mathcal U}}
\bmdefine\BCV{{\mathcal V}}
\bmdefine\BCW{{\mathcal W}}
\bmdefine\BCX{{\mathcal X}}
\bmdefine\BCY{{\mathcal Y}}
\bmdefine\BCZ{{\mathcal Z}}
\bmdefine\Bzr{ 0}
\bmdefine\Ba{ a}
\bmdefine\Bb{ b}
\bmdefine\Bc{ c}
\bmdefine\Bd{ d}
\bmdefine\Be{ e}
\bmdefine\Bf{ f}
\bmdefine\Bg{ g}
\bmdefine\Bh{ h}
\bmdefine\Bi{ i}
\bmdefine\Bj{ j}
\bmdefine\Bk{ k}
\bmdefine\Bl{ l}
\bmdefine\Bm{ m}
\bmdefine\Bn{ n}
\bmdefine\Bo{ o}
\bmdefine\Bp{ p}
\bmdefine\Bq{ q}
\bmdefine\Br{ r}
\bmdefine\Bs{ s}
\bmdefine\Bt{ t}
\bmdefine\Bu{ u}
\bmdefine\Bv{ v}
\bmdefine\Bw{ w}
\bmdefine\Bx{ x}
\bmdefine\By{ y}
\bmdefine\Bz{ z}
\bmdefine\BA{ A}
\bmdefine\BB{ B}
\bmdefine\BC{ C}
\bmdefine\BD{ D}
\bmdefine\BE{ E}
\bmdefine\BF{ F}
\bmdefine\BG{ G}
\bmdefine\BH{ H}
\bmdefine\BI{ I}
\bmdefine\BJ{ J}
\bmdefine\BK{ K}
\bmdefine\BL{ L}
\bmdefine\BM{ M}
\bmdefine\BN{ N}
\bmdefine\BO{ O}
\bmdefine\BP{ P}
\bmdefine\BQ{ Q}
\bmdefine\BR{ R}
\bmdefine\BS{ S}
\bmdefine\BT{ T}
\bmdefine\BU{ U}
\bmdefine\BV{ V}
\bmdefine\BW{ W}
\bmdefine\BX{ X}
\bmdefine\BY{ Y}
\bmdefine\BZ{ Z}
\begin{document}
\title{The asymptotically sharp Korn interpolation and second inequalities for shells}
\author{D. Harutyunyan}
\maketitle

\begin{abstract}
We consider shells in three dimensional Euclidean space which have bounded principal curvatures. We prove Korn's interpolation (or the so called first and a half\footnote{The inequality first introduced in [\ref{bib:Gra.Har.1}]}) and second inequalities on that kind of shells for $\Bu\in W^{1,2}$ vector fields, imposing no boundary or normalization conditions on $\Bu.$ The constants in the estimates are optimal in terms of the asymptotics in the shell thickness $h,$  having the scalings $h$ or $O(1).$ The Korn interpolation inequality reduces the problem of deriving any linear Korn type estimate for shells to simply proving a Poincar\'e type estimate with the symmetrized gradient on the right hand side. In particular this applies to linear geometric rigidity estimates for shells, i.e., Korn's fist inequality without boundary conditions.
\end{abstract}

\section{Introduction}
\label{sec:1}
A shell of thickness $h$ in three dimensional Euclidean space is given by $\Omega=\{x+t\Bn(x) \ : \ x\in S,\ t\in [-h/2,h/2]\},$ where $S\subset\mathbb R^3$ is a bounded and connected smooth enough regular surface with a unit normal $\Bn(x)$ at the point $x\in S.$ The surface $S$ is called the mid-surface of the shell $\Omega.$ Understanding the rigidity of a shell is one of the challenges in nonlinear elasticity, where there are still many open questions. Unlike the situation for shells in general, the rigidity of plates has been quite well understood by Friesecke, James and M\"uller in their celebrated papers [\ref{bib:Fri.Jam.Mue.1},\ref{bib:Fri.Jam.Mue.2}]. It is known that the rigidity of a shell $\Omega$ is closely related to the optimal Korn's constant in the nonlinear (in some cases linear) first Korn's inequality [\ref{bib:Fri.Jam.Mue.2},\ref{bib:Gra.Har.1}], which is a geometric rigidity estimate for $\Bu\in H^1(\Omega)$ fields 
[\ref{bib:Kohn},\ref{bib:Fri.Jam.Mue.1},\ref{bib:Ciarlet1},\ref{bib:Cia.Mar},\ref{bib:Cia.Mar.Mar.1}]. Depending on the problem, the field $\Bu\in H^1$ may or may not satisfy boundary conditions, e.g. [\ref{bib:Kohn},\ref{bib:Fri.Jam.Mue.2},\ref{bib:Gra.Har.1}]. Finding the optimal constants in Korn's inequalities is a central task in problems concerning shells in general. The Friesecke-James-M\"uller estimate reads as follows:  \textit{Assume $\Omega\subset\mathbb R^3$ is open bounded connected and Lipschitz. Then there exists a constant $C_I=C_I(\Omega),$ such that for every vector field $\Bu\in H^1(\Omega),$ there exists a constant rotation $\BR\in SO(3)$, such that
\begin{equation}
\label{1.1}
\|\nabla\Bu-\BR\|^2\leq C_{I}\int_\Omega\mathrm{dist}^2(\nabla\Bu(x),SO(3))dx.
\end{equation}
}
The linearization of (\ref{1.1}) around the identity matrix is Korn's first inequality [\ref{bib:Korn.1},\ref{bib:Korn.2},\ref{bib:Kon.Ole.2},\ref{bib:Fri.Jam.Mue.1},\ref{bib:Ciarlet1}] without boundary conditions and reads as follows: \textit{Assume $\Omega\subset\mathbb R^n$ is open bounded connected and Lipschitz. Then there exists a constant $C_{II}=C_{II}(\Omega),$ depending only on $\Omega,$ such that for every vector field $\Bu\in H^1(\Omega)$ there exists a skew-symmetric matrix $\BA\in \mathbb R^{n\times n,}$ i.e., $A+A^T=0,$ such that
\begin{equation}
\label{1.2}
\|\nabla\Bu-\BA\|_{L^2(\Omega)}^2\leq C_{II}\|e(\Bu)\|_{L^2(\Omega)}^2,
\end{equation}
where $e(\Bu)=\frac{1}{2}(\nabla\Bu+\nabla\Bu^T)$ is the symmetrized gradient (the strain in linear elasticity).} The estimate (\ref{1.2}) is traditionally proven by using Korn's second inequality, that reads as follows: \textit{Assume $\Omega\subset\mathbb R^n$ is open bounded connected and Lipschitz. Then there exists a constant $C=C(\Omega),$ depending only on $\Omega,$ such that for every vector field $\Bu\in H^1(\Omega)$ there holds:}
\begin{equation}
\label{1.3}
\|\nabla\Bu\|_{L^2(\Omega)}^2\leq C(\|\Bu\|_{L^2(\Omega)}^2+\|e(\Bu)\|_{L^2(\Omega)}^2).
\end{equation}
It is known that if $\Omega$ is a thin domain with thickness $h,$ then in general the optimal constants $C$ in all inequalities (\ref{1.1})-(\ref{1.3}) blow up as $h\to 0.$ In particular, if $\Omega$ is a plate given by $\Omega=\omega\times(0,h),$ where $\omega\subset\mathbb R^2$ is open bounded connected and Lipschitz, then as proven in [\ref{bib:Fri.Jam.Mue.2}] one has $C_I=c_1(\omega)h^2$ and $C_{II}=c_2(\omega)h^2$ asymptotically as $h\to 0.$ While the asymptotics of $C_{II}$ is known in the case when $\Bu$ satisfies zero Dirichlet boundary conditions on the thin face of the shell [\ref{bib:Gra.Har.2},\ref{bib:Harutyunyan.2}] ($C_{II}$ scaling like $h^2,$ $h^{3/2},$ $h^{4/3}$ or $h^{1}$), it is open for general fields $\Bu\in H^1(\Omega).$ In this work we are concerned with the asymptotics of the constant $C$ in (\ref{1.3}) or more precisely in the so called Korn interpolation inequality, or the first-and-a-half Korn inequality [\ref{bib:Gra.Har.1}], in the general case when $\Omega$ is a shell. The statements solving the problem practically completely appear in the next section.

\section{Main Results}
\setcounter{equation}{0}
\label{sec:2}

We first introduce the main notation and definitions. We will assume throughout this work that the mid-surface $S$ of the shell $\Omega$ is connected, compact, regular and of class $C^3$ up to its boundary. We also assume that $S$ has a finite atlas of patches $S\subset\cup_{i=1}^k\Sigma_i$ such that each  patch $\Sigma_i$ can be parametrized by the principal variables $z$ and $\Gth$ ($z=$constant and $\Gth=$constant are the principal lines on $\Sigma_i$) that change in the ranges $z\in [z_i^1(\Gth),z_i^2(\Gth)]$ for $\Gth\in [0,\omega_i],$ where $\omega_i>0$ for $i=1,2,\dots,k.$ Moreover, the functions $z_i^1(\Gth)$ and $z_i^2(\Gth)$ satisfy the conditions
\begin{align}
\label{2.1}
&\min_{1\leq i\leq k}\inf_{\Gth\in [0,\omega_i]}[z_i^2(\Gth)-z_i^1(\Gth)]=l>0,\quad\max_{1\leq i\leq k}\sup_{\Gth\in [0,\omega_i]}[z_i^2(\Gth)-z_i^1(\Gth)]=L<\infty,\\ \nonumber
&\max_{1\leq i\leq k}\left(\|z_i^1\|_{W^{1,\infty}[0,\omega_i]}+\|z_i^2\|_{W^{1,\infty}[0,\omega_i]}\right)=Z<\infty.
\end{align}
Since there will be no condition imposed on the vector field $\Bu\in H^1(\Omega),$ (see Theorem~\ref{th:2.1}), we can restrict ourselves to a single patch $\Sigma\subset S$ and denote it by $S$ for simplicity. If the parametrization of $S$ is $\Br=\Br(\Gth,z)$ and $\Bn$ is the unit normal to $S,$ denoting the normal variable by $t$ and $A_{z}=\left|\frac{\partial \Br}{\partial z}\right|, A_{\Gth}=\left|\frac{\partial \Br}{\partial\Gth}\right|$ we get
\begin{equation}
\label{2.2}
\nabla\Bu=
\begin{bmatrix}
  u_{t,t} & \dfrac{u_{t,\Gth}-A_{\Gth}\Gk_{\Gth}u_{\Gth}}{A_{\Gth}(1+t\Gk_{\Gth})} &
\dfrac{u_{t,z}-A_{z}\Gk_{z}u_{z}}{A_{z}(1+t\Gk_{z})}\\[3ex]
u_{\Gth,t}  &
\dfrac{A_{z}u_{\Gth,\Gth}+A_{z}A_{\Gth}\Gk_{\Gth}u_{t}+A_{\Gth,z}u_{z}}{A_{z}A_{\Gth}(1+t\Gk_{\Gth})} &
\dfrac{A_{\Gth}u_{\Gth,z}-A_{z,\Gth}u_{z}}{A_{z}A_{\Gth}(1+t\Gk_{z})}\\[3ex]
u_{ z,t}  & \dfrac{A_{z}u_{z,\Gth}-A_{\Gth,z}u_{\Gth}}{A_{z}A_{\Gth}(1+t\Gk_{\Gth})} &
\dfrac{A_{\Gth}u_{z,z}+A_{z}A_{\Gth}\Gk_{z}u_{t}+A_{z,\Gth}u_{\Gth}}{A_{z}A_{\Gth}(1+t\Gk_{z})}
\end{bmatrix}
\end{equation}
in the orthonormal local basis $(\Bn,\Be_\Gth,\Be_z),$ where $\Gk_{z}$ and $\Gk_{\Gth}$ are the two principal curvatures. Here we use the notation $f_{,\alpha}$ for the partial derivative $\frac{\partial}{\partial\alpha}$ inside the gradient matrix of a vector field $\Bu\colon\Omega\to\mathbb R^3.$ The gradient on $S$ or the so called simplified gradient denoted by $\BF$ is obtained from (\ref{2.2}) by putting $t=0.$ We will work with $\BF$ and then pass to $\nabla\Bu$ using their closeness to the order of $h$ due to the smallness of the variable $t.$ In this paper all norms $\|\cdot\|$ are $L^{2}$ norms and the $L^2$ inner product of two functions $f,g\colon\Omega\to\mathbb R$ will be given by $(f,g)_{\Omega}=\int_{\Omega}A_zA_\Gth f(t,\Gth,z)g(t,\Gth,z)d\Gth dzdt,$ which gives rise to the norm $\|f\|_{L^2(\Omega)}$.
In what follows in the below theorems, the constants $h_0>0$ and $C>0$ will depend only on the shell mid-surface parameters, which are the quantities $\omega,l,L,Z,a=\min_{D}(A_\Gth,A_z), A=\|A_\Gth\|_{W^{2,\infty}(D)}+\|A_z\|_{W^{2,\infty}(D)}$ and  $k=\|\Gk_\Gth\|_{W^{1,\infty}(D)}+\|\Gk_z\|_{W^{1,\infty}(D)},$ where $D=\{(\Gth,z)\ : \ \Gth\in [0,\omega], z\in[z^1(\Gth),z^2(\Gth)]\}.$ Our results are Korn's interpolation and second inequalities for the shell $\Omega,$ providing sharp Ansatz-free lower bounds for displacements $\Bu\in H^1(\Omega,\mathbb R^3)$ imposing no boundary condition on the field $\Bu.$ The estimates are also proven to be asymptotically optimal as $h\to 0.$
\begin{theorem}[Korn's interpolation inequality]
\label{th:2.1}
There exists constants $h_0,C>0,$ such that Korn's interpolation inequality holds:
\begin{equation}
  \label{2.3}
\|\nabla\Bu\|^2\leq C\left(\frac{\|\Bu\cdot\Bn\|\cdot\|e(\Bu)\|}{h}+\|\Bu\|^2+\|e(\Bu)\|^2\right),
\end{equation}
for all $h\in(0,h_0)$ and $\Bu=(u_t,u_\Gth,u_z)\in H^1(\Omega),$ where $\Bn$ is the unit normal to the mid-surface $S.$
Moreover, the exponent of $h$ in the inequality (\ref{2.3}) is optimal for any shell $\Omega$ satisfying the above imposed regularity condition together with (\ref{2.1}), i.e., there exists a displacement $\Bu\in H^1(\Omega,\mathbb R^3)$ realizing the asymptotics of $h$ in (\ref{2.3}).
\end{theorem}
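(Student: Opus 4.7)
The plan is to pass from the full three-dimensional gradient $\nabla\Bu$ to the simplified mid-surface gradient $\BF$ obtained from (\ref{2.2}) by setting $t=0$, prove (\ref{2.3}) for $\BF$, and then transfer back. Decomposing $\BF = e(\BF) + \BW$, the symmetric part $e(\BF)$ is pointwise close to $e(\Bu)$ up to $O(h)$ corrections, so the core task is to bound $\|\BW\|^2$, where $\BW$ carries two transverse shears (involving $\partial_t$ derivatives) and the in-plane normal rotation $\Go := \tfrac{1}{2}(\BF_{23}-\BF_{32})$.

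The reduction to $\BF$ is a perturbation argument: since $|t|\leq h/2$ and the metric coefficients $A_\Gth, A_z$ together with the principal curvatures $\Gk_\Gth, \Gk_z$ are bounded, each entry of $\nabla\Bu-\BF$ and of $e(\Bu)-e(\BF)$ is pointwise bounded by $Ch$ times an entry of $\nabla\Bu$ plus $C$ times a component of $\Bu$; choosing $h<h_0$ small enough, all such terms are absorbed on the left of (\ref{2.3}). The diagonals and the symmetric off-diagonal combinations of $\BF$ are pointwise controlled by $e(\Bu)$ and $\Bu$, so only $\|\BW\|^2$ remains. For the two transverse shears I would write, e.g., $2u_{\Gth,t}=(u_{\Gth,t}+u_{t,\Gth}/A_\Gth)-(u_{t,\Gth}/A_\Gth-u_{\Gth,t})$; the first piece is a symmetric combination controlled by $e(\Bu)$ plus curvature lower-order terms, and after one integration by parts in $\Gth$ the second piece is bounded by mixed terms of the type $\|u_t\|\|\BF\|$, with any resulting $\|\BF\|$ contribution absorbed on the left.

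The delicate step is bounding $\|\Go\|^2$. Here I would derive a shell analog of the Saint-Venant compatibility identity in the tangential directions, expressing $\partial_z\Go$ and $\partial_\Gth\Go$ as explicit linear combinations of tangential derivatives of entries of $e(\BF)$ plus curvature-weighted first derivatives of $u_t$ (schematically $\Gk_\Gth u_{t,z}$ and $\Gk_z u_{t,\Gth}$), modulo lower-order terms in $\Bu$ with bounded coefficients. Pairing against $\Go$ and integrating by parts in $\Gth, z$ over $\Omega$, with boundary contributions on the faces $t=\pm h/2$ handled by the thin-domain trace estimate $\|f\|_{L^2(S\times\{\pm h/2\})}^2 \leq \tfrac{C}{h}\|f\|_{L^2(\Omega)}^2 + Ch\|\partial_t f\|_{L^2(\Omega)}^2$, generates $\|e(\Bu)\|^2$ from the strain pieces and, after one further integration by parts to move the remaining tangential derivative off $u_t$, a curvature-$u_t$ contribution bounded by $\tfrac{C}{h}\|u_t\|\,\|e(\Bu)\|$. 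A Young-type absorption of the residual $\|\Go\|$-dependent terms then produces precisely the asymmetric cross term $\tfrac{C}{h}\|\Bu\cdot\Bn\|\,\|e(\Bu)\|$ in (\ref{2.3}).

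The main obstacle I expect is deriving and using the compatibility identity for $\Go$ cleanly in curvilinear principal coordinates: the formula (\ref{2.2}) involves six metric/curvature functions and their derivatives, so each integration by parts generates many lower-order terms that must each be shown to be $O(h)$-absorbable or to fit into one of the three right-hand-side pieces of (\ref{2.3}). For the sharpness claim, I would construct a Kirchhoff-type Ansatz of the form $\Bu=(\eta,\,-(t/A_\Gth)\eta_{,\Gth},\,-(t/A_z)\eta_{,z})$, localizing $\eta$ on a small principal patch and, if necessary, adding a lower-order correction to neutralize the leading curvature contribution $\Gk_\Gth\eta$ to the in-plane strain; for such $\Bu$ one gets $\|\nabla\Bu\|^2\sim h$, $\|\Bu\cdot\Bn\|^2\sim h$, and $\|e(\Bu)\|^2\sim h^3$, so the cross term $\|\Bu\cdot\Bn\|\,\|e(\Bu)\|/h$ is of order $h$, matching $\|\nabla\Bu\|^2$ and showing that the $h$-exponent on the right-hand side cannot be improved.
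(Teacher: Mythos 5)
Your proposal shares the paper's overall scaffolding (reduce $\nabla\Bu$ to the simplified gradient $\BF$, bound the off-diagonal blocks, transfer back using $|t|\leq h/2$), but there are two genuine gaps.

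\emph{Which block carries the $1/h$ term.} You identify the in-plane rotation $\Go=\tfrac12(\BF_{23}-\BF_{32})$ as the delicate piece generating the cross term $\tfrac{1}{h}\|\Bu\cdot\Bn\|\,\|e(\Bu)\|$, and treat the two transverse shears (entries involving $\partial_t$) as the easier part. In the paper's proof it is the reverse: the $23$-block is handled by a plain two-dimensional Korn second inequality in the tangential variables $(\Gth,z)$ and yields only $\|\Bu\|^2+\|e(\BF)\|^2$ on the right (estimate (\ref{4.1})), with no $1/h$. The $1/h$ comes exclusively from the $12$- and $13$-blocks, which mix $\partial_t$ with a tangential derivative across the thin direction. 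Your proposed treatment of those blocks---rewriting $2u_{\Gth,t}$ as a symmetric combination plus an antisymmetric remainder and integrating by parts once in $\Gth$---lands on ``mixed terms of the type $\|u_t\|\,\|\BF\|$''. That is not absorbable: $\|\BF\|$ is the quantity being estimated, and you need one factor to be $\|e(\Bu)\|$, not $\|\BF\|$, together with the sharp $1/h$. The paper's route to this is a genuinely new technical ingredient: Lemma~\ref{lem:3.1}, a gradient-separation estimate for harmonic functions on thin rectangles, used through Lemma~\ref{lem:4.1} by comparing $u_t$ with its harmonic part on each two-dimensional slice $(t,z)$ or $(t,\Gth)$. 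Without an analogue of this, the integration-by-parts manipulation you describe does not close.

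\emph{Sharpness.} Your Kirchhoff ansatz $\Bu=(\eta,-(t/A_\Gth)\eta_{,\Gth},-(t/A_z)\eta_{,z})$ gives $\|e(\Bu)\|^2\sim h^3$ only on a flat region: on a genuine shell the $(2,2)$ and $(3,3)$ entries of $e(\Bu)$ contain the $O(1)$ membrane terms $\Gk_\Gth u_t$ and $\Gk_z u_t$, so $\|e(\Bu)\|^2\sim h$, and your ansatz then gives $\|\nabla\Bu\|^2\sim h$ against a right side of order $1$: not saturated. You acknowledge this and propose a ``lower-order correction to neutralize the leading curvature contribution'', but no such correction exists for arbitrary bounded curvature (e.g.\ for positive Gaussian curvature pure bending is obstructed). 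The paper instead uses the short-wavelength ansatz (\ref{4.11}) with $u_t=W(\Gth/\sqrt h,z)$; the $\sqrt h$-oscillation is what makes $\|\nabla\Bu\|^2\sim 1$, $\|u_t\|^2\sim h$, $\|e(\Bu)\|^2\sim h$, so that the cross term $\tfrac1h\|u_t\|\,\|e(\Bu)\|\sim 1$ matches the left side for every admissible shell, without any cancellation of curvature terms. Your ansatz, as stated, does not establish optimality.
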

\begin{theorem}[Korn's second inequality]
\label{th:2.2}
We get by the Cauchy-Schwartz inequality from (\ref{2.3}) the following Korn's second inequality for shells: There exists constants $h_0,C>0,$ such that Korn's second inequality holds:
\begin{equation}
  \label{2.4}
\|\nabla\Bu\|^2\leq \frac{C}{h}(\|\Bu\|^2+\|e(\Bu)\|^2),
\end{equation}
for all $h\in(0,h_0)$ and $\Bu=(u_t,u_\Gth,u_z)\in H^1(\Omega).$ Moreover, the exponent of $h$ in the inequality (\ref{2.4}) is optimal for any shell $\Omega$ satisfying the above imposed regularity condition together with (\ref{2.1}), i.e., there exists a displacement $\Bu\in H^1(\Omega,\mathbb R^3)$ realizing the asymptotics of $h$ in (\ref{2.4}).
\end{theorem}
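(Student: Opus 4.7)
The inequality (2.4) follows immediately from (2.3) by applying the weighted Cauchy--Schwarz inequality to the mixed term. Specifically,
\[
\|\Bu\cdot\Bn\|\,\|e(\Bu)\|\;\leq\;\tfrac{1}{2}\bigl(\|\Bu\cdot\Bn\|^2+\|e(\Bu)\|^2\bigr)\;\leq\;\tfrac{1}{2}\bigl(\|\Bu\|^2+\|e(\Bu)\|^2\bigr),
\]
so dividing by $h$ bounds the first term on the right of (2.3) by $\tfrac{1}{2h}(\|\Bu\|^2+\|e(\Bu)\|^2)$. Taking $h_0\leq 1$ absorbs the remaining $\|\Bu\|^2+\|e(\Bu)\|^2$ contribution from (2.3) into the $h^{-1}$ factor, yielding (2.4) with a possibly enlarged constant $C$.

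For the sharpness statement, my plan is to reuse the extremizer produced in the optimality half of Theorem~\ref{th:2.1}. That Ansatz is chosen so that every term of (2.3) is of the same order $O(1)$: it satisfies $\|\Bu\cdot\Bn\|^2\sim h$, $\|e(\Bu)\|^2\sim h$, $\|\nabla\Bu\|^2\sim 1$, and (being pointwise bounded on an $O(h)$-volume region) also $\|\Bu\|^2\sim h$. For such a field one instantly reads off $\|\nabla\Bu\|^2\sim h^{-1}(\|\Bu\|^2+\|e(\Bu)\|^2)$, proving sharpness.

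Explicitly, the candidate is a Kirchhoff--Love bending Ansatz on the patch: take $u_t(t,\Gth,z)=w(\Gth)$ with $w(\Gth)=\chi(\Gth)\sin(n\Gth)$, $n\sim h^{-1/2}$ and $\chi$ a smooth compactly supported cutoff, and set $u_\Gth=-t\,\partial_\Gth w/A_\Gth$, $u_z=0$. Using (2.2), the dominant slot in $\nabla\Bu$ is the entry $u_{t,\Gth}/A_\Gth\sim n$, while the leading shear cancels by construction; the curvature-induced membrane contribution $\Gk_\Gth w$ and the bending contribution $t\,\partial_\Gth^2 w$ are uniformly $O(1)$ pointwise, so $\|e(\Bu)\|^2=O(h)$ after integration over the $O(h)$-volume shell, and $\|\nabla\Bu\|^2\sim h n^2\sim 1$. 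The main---and only mildly technical---point is verifying that the metric factors $A_\Gth,A_z$ together with the $(1+t\Gk)$ corrections appearing in (2.2) do not contaminate the Kirchhoff--Love cancellation at leading order; this is guaranteed by the $W^{2,\infty}$ and $W^{1,\infty}$ bounds on $A$ and $\Gk$ in the admissibility hypothesis, which keep all residual terms uniformly bounded in $h$.
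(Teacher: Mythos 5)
Your derivation of (2.4) from (2.3) via Cauchy--Schwarz and absorption of the zeroth-order terms for $h\le 1$ is exactly the paper's stated route. For sharpness you use essentially the paper's Kirchhoff--Love Ansatz (4.11), specialized to $z$-independent $W$ with $u_z\equiv 0$ and a cutoff in place of periodicity, and you supply the scaling computation ($\|\nabla\Bu\|^2\sim 1$, $\|\Bu\|^2\sim h$, $\|e(\Bu)\|^2\sim h$) that the paper merely references as omitted; this is the same approach with the details filled in.
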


\section{The key lemma}
\label{sec:3}
\setcounter{equation}{0}
In this section we prove a gradient separation estimate for harmonic functions in two dimensional thin rectangles, which is one of the key estimates in the proof of Theorem~\ref{th:2.1}.
\begin{lemma}
\label{lem:3.1}
Assume $h,b>0$ such that $b>3h.$ Denote $R_b=(0,h)\times(0,b)\subset\mathbb R^2.$ There exists a universal constat $C>0,$ such that any harmonic function $w\in C^2(R_b)$ fulfills the inequality
\begin{equation}
\label{3.1}
\|w_y\|_{L^2(R_b)}^2\leq C\left(\frac{1}{h}\|w\|_{L^2(R_b)}\cdot\|w_x\|_{L^2(R_b)}+\frac{1}{b^2}\|w\|_{L^2(R_b)}^2+\|w_x\|_{L^2(R_b)}^2\right).
\end{equation}
\end{lemma}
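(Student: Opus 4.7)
My plan is to base the argument on the integration-by-parts identity
\begin{equation*}
\|w_x\|^2+\|w_y\|^2 = \int_0^b[ww_x]_{x=0}^{x=h}\,dy + \int_0^h[ww_y]_{y=0}^{y=b}\,dx,
\end{equation*}
which follows immediately from $\int_{R_b}w\Delta w=0$. Rearranging writes $\|w_y\|^2$ as $-\|w_x\|^2$ plus two boundary integrals, so it suffices to control each boundary contribution by the right-hand side of (3.1).

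For the short-side contribution on $x=0,h$ the thinness of $R_b$ is decisive. The weighted identity $\int_0^b w(h,y)^2\,dy = \tfrac{1}{h}\|w\|^2+\tfrac{2}{h}\int_{R_b}x\,ww_x$, obtained by integrating $\partial_x[(x/h)w^2]$, gives $\|w(h,\cdot)\|_{L^2(0,b)}^2\le \tfrac{1}{h}\|w\|^2+2\|w\|\|w_x\|$ and analogously at $x=0$. Applying the same identity to $w_x$ brings in $\|w_{xx}\|=\|w_{yy}\|$ by harmonicity, which I would control by an interior Caccioppoli-type estimate at the $h$-scale. Cauchy--Schwarz then produces the short-side contribution in the form $C(\tfrac{1}{h}\|w\|\|w_x\|+\|w_x\|^2)$.

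The long-side integral $\int_0^h[ww_y]_{y=0}^{y=b}\,dx$ is the main obstacle, since $w_y$ is precisely the quantity being bounded. My plan is to use two Rellich-type identities obtained by multiplying $\Delta w=0$ by the weighted test functions $yw_y$ and $(b-y)w_y$; after integration by parts these give individual expressions for $\tfrac{b}{2}\int_0^h w_y^2(x,0)\,dx$ and $\tfrac{b}{2}\int_0^h w_y^2(x,b)\,dx$ in terms of the corresponding traces of $w_x^2$, short-side $w_xw_y$ cross terms, and a $\tfrac{1}{2}(\|w_y\|^2-\|w_x\|^2)$ correction to be absorbed to the left. Combined with a weighted-multiplier bound for the long-side traces of $w$ itself in terms of $\tfrac{1}{b}\|w\|^2$ and $\|w\|\|w_y\|$, Cauchy--Schwarz on each of the two long-side contributions to $\int ww_y$ converts the long-side integral into a combination of $\tfrac{1}{h}\|w\|\|w_x\|$, $\tfrac{1}{b^2}\|w\|^2$, and $\|w_x\|^2$, matching (3.1); the $\tfrac{1}{b^2}\|w\|^2$ piece arises from the linear-in-$y$ modes of $w$, which lie in the kernel of the $w_y$-multiplier Rellich identities and must be handled via the Poincar\'e--Wirtinger inequality on $(0,b)$.

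The principal difficulty lies in the long-side analysis, where the absence of a small length scale in the $y$-direction prevents any direct trace-inequality argument and forces reliance on harmonicity-specific Rellich identities. The hypothesis $b>3h$ is used to give enough room in the long direction for the Poincar\'e estimate with constant of order $b$ and for localization of the $h$-wide boundary layers near $y=0,b$, where an interior Caccioppoli estimate on $h\times h$ subdomains bridges between interior and boundary quantities.
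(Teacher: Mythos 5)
Your plan takes a genuinely different route from the paper's. The paper's proof rests on a gradient-rigidity estimate for harmonic functions on thin rectangles---Step~1, $\|w_y-a\|_{L^2(R)}\le\tfrac{c}{h}\|w_x\|_{L^2(R)}$, imported from Korn's first inequality for plates---and then builds the interior estimate with a cutoff-function energy argument, optimizes over the cutoff width, and patches the $O(h)$ boundary layers with the Kondratiev--Oleinik lemmas (Lemmas~3.2--3.3). You instead start from the Green identity $\|w_x\|^2+\|w_y\|^2 = \int_0^b[ww_x]_{x=0}^{x=h}\,dy + \int_0^h[ww_y]_{y=0}^{y=b}\,dx$ and try to bound the two boundary contributions directly by trace and Rellich identities. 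That is an attractive and more self-contained idea, but as written it has gaps that I do not think are repairable without importing the paper's Step~1 (or something equivalent of that strength).

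The first and most serious gap is in the short-side contribution on $x=0,h$. Your Cauchy--Schwarz step needs a bound on the trace $\|w_x(h,\cdot)\|_{L^2(0,b)}$, and the weighted identity you propose produces $\|w_x(h,\cdot)\|^2 \le \tfrac{1}{h}\|w_x\|^2 + 2\|w_x\|\,\|w_{xx}\|_{L^2(R_b)}$. But $\|w_{xx}\|_{L^2(R_b)}$ is not controlled by anything on the right-hand side of (3.1). The ``interior Caccioppoli at the $h$-scale'' you invoke cannot close this: (i) it is an interior estimate and cannot reach the boundary strips of width $\sim h$ at $x=0,h$ that are precisely what the trace identity integrates over, and (ii) even on an interior strip it gives $\|\nabla w_x\|\lesssim \tfrac{1}{h}\|w_x\|$, so the trace bound degrades to $\|w_x(h,\cdot)\|\lesssim h^{-1/2}\|w_x\|$; plugged into Cauchy--Schwarz against $\|w(h,\cdot)\|\lesssim h^{-1/2}\|w\| + \|w\|^{1/2}\|w_x\|^{1/2}$, the cross term produces $h^{-1/2}\|w\|^{1/2}\|w_x\|^{3/2}$, and Young's inequality on that term yields $\tfrac{1}{h}\|w\|^2$, strictly worse than the required $\tfrac{1}{h}\|w\|\,\|w_x\|$. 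Note also that integrating $\int_{R_b}x\,w_x w_{xx}\,dx\,dy$ by parts in $x$ simply reproduces the trace and gives no new information; trading $w_{xx}$ for $-w_{yy}$ and integrating in $y$ reintroduces $w_{xy}$ and short-side cross traces, so there is no escape from second derivatives here.

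The second gap is the long-side Rellich identity. Multiplying $\Delta w=0$ by $y\,w_y$ and integrating gives
\begin{equation*}
\frac{b}{2}\int_0^h w_y^2(x,b)\,dx = \frac{1}{2}\bigl(\|w_y\|^2-\|w_x\|^2\bigr)+\frac{b}{2}\int_0^h w_x^2(x,b)\,dx - \int_0^b y\,[w_y w_x]_{x=0}^{x=h}\,dy.
\end{equation*}
The $\tfrac{1}{2}\|w_y\|^2$ piece, as you note, produces a $\tfrac{1}{b}\|w\|\,\|w_y\|$ term after Cauchy--Schwarz and can be absorbed; that part of the plan is sound and is indeed where the $\tfrac{1}{b^2}\|w\|^2$ term naturally arises. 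But the identity also produces $\int_0^h w_x^2(x,b)\,dx$ and the cross term $\int_0^b y\,[w_y w_x]_{x=0}^{x=h}\,dy$. Neither is controlled: the first is a short-edge trace of $w_x$ with no small length scale in $y$ to help, and the second involves the trace of $w_y$ on $x=0,h$, which is precisely the sort of quantity we are trying to avoid. More fundamentally, the Green identity you start from is tautological: $\int_0^b[ww_x]_{x=0}^{x=h}\,dy = \|w_x\|^2+\|w_y\|^2-\int_0^h[ww_y]_{y=0}^{y=b}\,dx$ by the same computation, so any bound on the two boundary terms that is itself derived by multiplying $\Delta w=0$ by test functions built from $w$ and its first derivatives tends to cycle back. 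The paper breaks this circularity with the a priori rigidity estimate (3.2), which is a genuinely external input; your outline, as it stands, does not have an analogous ingredient and I don't see how the Rellich-plus-trace bookkeeping can supply one.
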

\begin{proof}[Sketch of proof]
We divide the proof into four steps for the convenience of the reader. Let us point out that all the norms in the proof are $L^2(R_b)$ unless specified otherwise.\\
\textbf{Step 1. An estimate on rectangles.} \textit{Assume $h>0$ and denote $R=(0,h)\times(0,1)\subset\mathbb R^2.$ There exists a universal constat $c>0$ such that any harmonic function $w\in C^2(R)$ fulfills the inequality
\begin{equation}
\label{3.2}
\|w_y-a\|_{L^2(R)}\leq\frac{c}{h}\|w_x\|_{L^2(R)},
\end{equation}
where $a=\frac{1}{|R|}\int_R w_y$ is the average of $w_y$ over the rectangle $R.$
}
Estimate (\ref{3.2}) is derived from the linear version of (\ref{1.1}) for plates, i.e., the estimate (\ref{1.2}) for $\Omega=\omega\times(0,h)$ as mentioned in the previous section. Indeed, considering the plate $\Omega=R\times (0,1)\subset\mathbb R^3,$ and the displacement
$u_1(x,y)=w(x,y),\ u_2(x,y)=-\int_0^x w_y(t,y)dt+\int_0^y w_x(0,z)dz,\ u_3\equiv 0,$ one gets (\ref{3.2}) with $a_{12}$ instead of $a,$ but the quantity $\|w_y-\lambda\|_{L^2(R)}^2$ is minimized at $\lambda=a.$ Therefore (\ref{3.2}) follows.\\
\textbf{Step 2. An interior estimate on $w_y.$} \textit{There exists an absolute constant $C>0$ such that for any harmonic function $w\in C^2(R_b)$ the inequality holds:}
\begin{equation}
\label{3.3}
\int_{(h/4,3h/4)\times(0,b)}|w_y|^2\leq C\left(\frac{1}{h}\|w\|\cdot\|w_x\|+\frac{1}{b^2}\|w\|^2+\|w_x\|^2\right).
\end{equation}
Let $z\in(h,b/2)$ be a parameter and let $\varphi(y)\colon[0,b]\to [0,1]$ be a smooth cutoff function such that
$\varphi(y)=1$ for $y\in[z,b-z]$ and $|\nabla \varphi(y)|\leq \frac{2}{z}$ for $y\in[0,b].$ Next for $t\in (0,h/2)$ we denote $R_{t,z}=(h/2-t,h/2+t)\times(z,b-z),$ $R_{z}^{top}=(0,h)\times(b-z,b)$ and $R_{z}^{bot}=(0,h)\times(0,z).$ We multiply the equality $-\Delta w=0$ in $R_{b}$ by $\varphi w$ and integrate the obtained identity first by parts over $R_{t,b}$ and then in $t$ over $(h/4,h/2)$ to get the estimate
\begin{equation}
\label{3.4}
\int_{R_{h/4,z}}|\nabla w|^2\leq \frac{4}{h}\int_{R_b}|ww_x|+\frac{1}{\epsilon^2z^2}\int_{R_{z}^{bot}\cup R_{z}^{top}}w^2
+\epsilon^2\int_{R_{z}^{bot}\cup R_{z}^{top}}w_y^2,
\end{equation}
where $\epsilon>0$ is a parameter yet to be chosen. By the invariance of (\ref{3.2}) under the variable change $(x,y)\to (\lambda x,\lambda y),$ we have for some $a_1,a_2\in\mathbb R,$
\begin{equation}
\label{3.5}
\int_{R_{2z}^{bot}} |w_y-a_1|^2\leq \frac{cz^2}{h^2}\int_{R_{2z}^{bot}} |w_x|^2,\quad\text{and}\quad
\int_{R_{2z}^{top}} |w_y-a_2|^2\leq \frac{cz^2}{h^2}\int_{R_{2z}^{top}} |w_x|^2,
\end{equation}
which gives together with the triangle inequality the estimates
\begin{equation}
\label{3.6}
\int_{R_{h/4,z}}|\nabla w|^2\geq \frac{hz}{4}(a_1^2+a_2^2)-\frac{cz^2}{h^2}\int_{R_{2z}^{bot}} |w_x|^2-\frac{cz^2}{h^2}\int_{R_{2z}^{top}}|w_x|^2.
\end{equation}
An application of the triangle inequality to $\int_{R_{z}^{bot}}w_y^2, \int_{R_{z}^{top}}w_y^2$ in (\ref{3.4}) and utilization of (\ref{3.5}) and (\ref{3.6}) derives from (\ref{3.4}) for the value $\epsilon=1/4$ the estimate
\begin{equation}
\label{3.7}
\frac{hz}{8}(a_1^2+a_2^2)\leq \frac{4}{h}\int_{R_b}|ww_x|+\frac{16}{z^2}\int_{R_{z}^{bot}\cup R_{z}^{top}}w^2
+\frac{2cz^2}{h^2}\int_{R_{2z}^{bot}\cup R_{2z}^{top}}|w_x|^2.
\end{equation}
Newt we combine (\ref{3.4}) (for $\epsilon=1$), (\ref{3.5}) and (\ref{3.7}) to get the key interior estimate
\begin{equation}
\label{3.8}
\int_{R_{h/4,0}}|w_y|^2\leq C\left(\frac{1}{h}\int_{R_b}|ww_x|+\frac{1}{z^2}\|w\|^2
+\frac{z^2}{h^2}\|w_x\|^2\right).
\end{equation}
It remains to minimize the right hand side of (\ref{3.8}) subject to the constraint $h\leq z<b/2$ on the parameter $z$to get (\ref{3.3}) The procedure is standard and is left to the reader.\\
\textbf{Step 3. An estimate near the horizontal boundary of $R_b$.} \textit{There exists an absolute constant $C>0,$ such that for any harmonic function $w\in C^2(R)$ the inequality holds:}
\begin{equation}
\label{3.9}
\int_{R_{h}^{bot}\cup R_{h}^{top}}|w_y|^2\leq C\left(\frac{1}{h}\int_{R_b}|ww_x|+\frac{1}{b^2}\|w\|^2+\|w_x\|^2\right).
\end{equation}
The proof is similar to Step1 by the utilization of (\ref{3.5}) and (\ref{3.7}).\\
\textbf{Step 4. Proof of (\ref{3.1}).} We recall the following two auxiliary lemmas proven by Kondratiev and Oleinik [\ref{bib:Kon.Ole.2}], see also [\ref{bib:Harutyunyan.1}].
\begin{lemma}
\label{lem:3.2}
Assume $0<a$ and $f\colon[0,2a]\to\mathbb R$ is absolutely continuous. Then the inequality holds:
\begin{equation}
\label{3.10}
\int_0^af^2(t)dt\leq 4\int_a^{2a}f^2(t)dt+4\int_0^{2a}t^2t'^2(t)dt.
\end{equation}
\end{lemma}
\begin{lemma}
\label{lem:3.3}
Let $n\in\mathbb R^n,$ and let $\Omega\subset\mathbb R^n$ be open bounded connected and Lipschitz. Denote $\delta(x)=\mathrm{dist}(x,\partial\Omega).$ Assume $u\in C^2(\Omega)$ is harmonic. Then there holds:
\begin{equation}
\label{3.11}
\|\delta\nabla u\|_{L^2(\Omega)}\leq 2\|\nabla u\|_{L^2(\Omega)}.
\end{equation}
\end{lemma}
Fixing a point $y\in (h,b-h)$ and applying Lemma~\ref{lem:3.2} to the function $w_y(x,y)$ on the segment $[0,h/2]$ as a function in $x,$ we get
\begin{equation}
\label{3.12}
\int_{(0,h/4)\times(h,b-h)}|w_y|^2\leq \int_{(h/4,h/2)\times(h,b-h)}|w_y|^2+4\int_{(0,h/2)\times(h,b-h)}|xw_{xy}|^2.
\end{equation}
Lemma~\ref{lem:3.3} applied to the harmonic function $w_x$ reduces (\ref{3.12}) to the key estimate
\begin{equation}
\label{3.13}
\int_{(0,h/4)\times(h,b-h)}|w_y|^2\leq \int_{(h/4,h/2)\times(h,b-h)}|w_y|^2+16\int_{R_b}|w_{x}|^2.
\end{equation}
It remains to combine a similar estimate for the right part of the rectangle with (\ref{3.13}), (\ref{3.9}) and (\ref{3.3}).
\end{proof}

\section{Proof of the main results}
\label{sec:4}
\setcounter{equation}{0}

\begin{proof}[Sketch of proof of Theorem~\ref{th:2.1}]
Let us point out that throughout this section the constants $h_0,C>0$ will depend only on the quantities $a,A,\omega,l,L,k$ and $Z$ unless specified otherwise. We first prove the estimate with $\BF$ and $e(\BF)$ in place of $\nabla\Bu$ and $e(\Bu)$ in (\ref{2.3}), which we do 
block by block by freezing each of the variables $t,$ $\Gth$ and $z$.\\
\textbf{The block $23$.} We aim to prove the estimate
\begin{equation}
\label{4.1}
\|F_{23}\|^2+\|F_{32}\|^2\leq C(\|\Bu\|^2+\|e(\BF)\|^2).
\end{equation}
Denote $R_t=\{(\Gth,z)\ : \ \Gth\in(0,\omega), z\in(z^1(\Gth),z^2(\Gth))\}$ and assume $\varphi=\varphi(\Gth,z)\in C^1(R_t,\mathbb R)$ satisfies the conditions $0<c_1\leq \varphi(\Gth,z)\leq c_2,\ \|\nabla \varphi(\Gth,z)\|\leq c_3$ for all $(\Gth,z)\in R_t.$ Then, for any displacement $\BU=(u,v)\in H^1(R_t,\mathbb R^2),$ considering the auxiliary vector field $\BW=\left(u,\frac{1}{\varphi}v\right)\colon R_t\to\mathbb R^2,$ one can get from Korn's second inequality [\ref{bib:Kon.Ole.2}], that there exists a constant $c>0,$ depending only on the constants $\omega,l,L,Z$ and $c_i,\ i=1,2,3,$ such that for the matrix 
$
\BM_\varphi=
\begin{bmatrix}
u_x & \varphi u_y\\
v_x & \varphi v_y
\end{bmatrix}
$
fulfills the estimate
\begin{equation}
\label{4.2}
\|\BM_\varphi\|_{L^2(R_t)}^2\leq c(\|e(\BM_\varphi)\|_{L^2(R_t)}^2+\|u\|_{L^2(R_t)}^2+\|v\|_{L^2(R_t)}^2).
\end{equation}
An application of (\ref{4.2}) for $\varphi(\Gth,z)=\frac{A_\Gth}{A_z}$ and $\BU=(u_\Gth,u_z)$ gives (\ref{4.1}).
We combine the estimates for the other two blocks in one by first proving the following Korn-like inequality on thin rectangles, which will be the key estimate for the rest of the proof.
\begin{lemma}
\label{lem:4.1}
For $0<h\leq b/3$ denote $R=(0,h)\times(0,b).$ Given a displacement $\BU=(u(x,y),v(x,y))\in H^1(R,\mathbb R^2),$ the vector fields
$\alpha,\beta\in W^{1,\infty}(R,\mathbb R^2)$ and the function $w\in H^1(R,\mathbb R),$ denote the perturbed gradient as follows:
\begin{equation}
\label{4.3}
\BM=
\begin{bmatrix}
u_{x} & u_{y}+\alpha\cdot\BU\\
v_{x} & v_{y}+\beta\cdot\BU+w
\end{bmatrix}.
\end{equation}
Assume $\epsilon\in (0,1),$ then the following Korn-like interpolation inequality holds:
\begin{equation}
\label{4.4}
\|\BM\|_{L^2(R)}^2\leq C\left(\frac{\|u\|_{L^2(R)}\cdot \|e(\BM)\|_{L^2(R)}}{h}+\|e(\BM)\|_{L^2(R)}^2+\frac{1}{\epsilon}\|\BU\|_{L^2(R)}^2+\epsilon(\|w_{L^2(R)}\|^2+\|w_x\|_{L^2(R)}^2)\right),
\end{equation}
for all $h$ small enough, where $C$ depends only on the quantities $b,$ $\|\alpha\|_{W^{1,\infty}}$ and $\|\beta\|_{W^{1,\infty}}.$
\end{lemma}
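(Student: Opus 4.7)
My plan is to isolate the skew-symmetric part of $\BM$ and control the associated scalar rotation using Lemma~\ref{lem:3.1}. Setting $r := \frac{1}{2}(u_y + \alpha\cdot\BU - v_x) = \frac{1}{2}(\BM_{12} - \BM_{21})$, the Frobenius decomposition gives $\|\BM\|_{L^2(R)}^2 = \|e(\BM)\|_{L^2(R)}^2 + 2\|r\|_{L^2(R)}^2$. Since $\|e(\BM)\|^2$ already appears on the right of (\ref{4.4}), the entire task reduces to bounding $\|r\|_{L^2(R)}^2$ by the right-hand side of (\ref{4.4}).

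The first step is to reduce to the unperturbed base case $\alpha \equiv \beta \equiv 0$, $w \equiv 0$, i.e., to prove the auxiliary estimate
\[
\|\nabla\BU\|_{L^2(R)}^2 \;\le\; C\left(\frac{\|u\|_{L^2(R)}\,\|e(\BU)\|_{L^2(R)}}{h} + \|e(\BU)\|_{L^2(R)}^2 + \|\BU\|_{L^2(R)}^2\right).
\]
Writing $\BM = \nabla\BU + \BN$ with $\BN$ gathering the $\alpha\cdot\BU$, $\beta\cdot\BU$ and $w$ contributions, the $W^{1,\infty}$ bounds on $\alpha,\beta$ give $\|\BN\|^2, \|e(\BN)\|^2 \le C(\|\BU\|^2 + \|w\|^2)$. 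The bad cross-term $\|u\|\|w\|/h$ that appears when passing from $\|u\|\|e(\BU)\|$ to $\|u\|\|e(\BM)\|$ is handled by the 1D Poincar\'e-type identity $w(x,y) = w(0,y) + \int_0^x w_\xi(\xi,y)\,d\xi$ in the thin direction, followed by a Young split with parameter $\epsilon$; this is exactly what produces the $\tfrac{1}{\epsilon}\|\BU\|^2 + \epsilon(\|w\|^2+\|w_x\|^2)$ prefactor structure on the right-hand side of (\ref{4.4}).

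For the base estimate (no perturbation), the identities $u_y + v_x = 2\,e(\BU)_{12}$ and $u_y - v_x = -2r$ yield $\|u_y\|^2 + \|v_x\|^2 = 2\|e(\BU)_{12}\|^2 + 2\|r\|^2$, so it suffices to bound $\|u_y\|_{L^2(R)}^2$. Here I would apply Lemma~\ref{lem:3.1} taking ``$w$'' there to be $u$, so that ``$w_x$'' becomes $u_x = e(\BU)_{11}$ and the lemma outputs precisely the desired $\frac{1}{h}\|u\|\|e(\BU)\|$ scaling. Since $u$ is not harmonic in general, I would decompose $u = u_{\mathrm{h}} + u_0$, where $u_{\mathrm{h}}$ is the harmonic extension of $u|_{\partial R}$ and $u_0\in H^1_0(R)$; Lemma~\ref{lem:3.1} applies to $u_{\mathrm{h}}$ directly, and the non-harmonic piece $u_0$ is controlled via the thin-rectangle Poincar\'e inequality $\|u_0\|^2 \le Ch^2\|\nabla u_0\|^2$ together with the Dirichlet-orthogonality $\|\nabla u\|^2 = \|\nabla u_{\mathrm{h}}\|^2 + \|\nabla u_0\|^2$.

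The main obstacle will be the comparison of $\|u_{\mathrm{h}}\|_{L^2}$ and $\|u_{\mathrm{h},x}\|_{L^2}$ with the quantities $\|u\|$ and $\|e(\BU)\|_{L^2(R)}$ that are available from the data, since the harmonic extension is not in general $L^2$-contractive; I expect this to be resolved by a trace-interpolation argument on the thin rectangle combined with the Poincar\'e bound on $u_0$ above. A secondary difficulty is the careful accounting of Young-inequality residuals in the perturbation reduction so that they fit into the $1/\epsilon$ and $\epsilon$ prefactors of (\ref{4.4}) without generating bad $1/h$-scaled contributions.
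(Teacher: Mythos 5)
Your outer skeleton (skew-part reduction, harmonic replacement of $u$, Lemma~\ref{lem:3.1} on the harmonic part, thin-rectangle Poincar\'e on the remainder) matches the paper's in spirit, but the central step---the reduction to the unperturbed case $\alpha=\beta=0$, $w=0$---is where the argument breaks, and it is precisely the step the paper is designed to avoid. Writing $\BM=\nabla\BU+\BN$ and first proving a Korn interpolation for $\nabla\BU$, you must then pass from $\|u\|\,\|e(\BU)\|/h$ to $\|u\|\,\|e(\BM)\|/h$. Since $e(\BU)=e(\BM)-e(\BN)$ and $\|e(\BN)\|$ is only $O(\|\BU\|+\|w\|)$ with no gain in $h$, this produces a leftover of size
\[
\frac{\|u\|\,\|e(\BN)\|}{h}\ \lesssim\ \frac{\|u\|\big(\|\BU\|+\|w\|\big)}{h},
\]
and no Young split can put this under $\frac{1}{\epsilon}\|\BU\|^2+\epsilon\big(\|w\|^2+\|w_x\|^2\big)$: whichever way you split, a factor $1/h$ or $1/h^2$ survives. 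Your proposed fix via the one-dimensional identity $w(x,y)=w(0,y)+\int_0^x w_\xi\,d\xi$ does not remove the obstruction, because $w$ has no boundary control (it is a general $H^1$ function), so the trace $w(0,\cdot)$ costs you back exactly the $h^{-1/2}$ you tried to save, and the argument is circular.

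The paper sidesteps the reduction entirely: it never compares $e(\BU)$ with $e(\BM)$. Instead it writes the Laplacian of $u$ directly in terms of derivatives of strain entries of $\BM$,
\[
\Delta u \;=\; \big(e_{11}(\BM)-e_{22}(\BM)\big)_x + \big(2e_{12}(\BM)\big)_y + g_x - f_y,
\]
and tests this against $u-\tilde u\in H^1_0(R)$. Because $\|u-\tilde u\|\le h\|\nabla(u-\tilde u)\|$, the perturbation contributes $h\big(\|f_y\|+\|g_x\|\big)$ rather than $\|f_y\|+\|g_x\|$ to the bound on $\|\nabla(u-\tilde u)\|$; that extra factor of $h$ is what cancels the $1/h$ in the Lemma~\ref{lem:3.1} output and turns the would-be catastrophic term into the benign $\|u\|\big(\|f_y\|+\|g_x\|\big)$ of (\ref{4.7}). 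Your plan also leaves the non-harmonic part $u_0$ controlled only via Dirichlet orthogonality, i.e.\ by $\|\nabla u\|$, which contains the very quantity $\|u_y\|$ you are estimating; to break this circularity you again need the Korn-type identity for $\Delta u$ in terms of the strain of $\BM$. In short: the perturbation must be handled inside the PDE test (so it enters multiplied by the small function $u-\tilde u$), not by peeling it off from $\BM$ at the start.
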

\begin{proof}
Let us point out that in the proof of Lemma~\ref{lem:4.1}, the constant $C$ may depend only on $b,$ $\|\alpha\|_{W^{1,\infty}}$ and $\|\beta\|_{W^{1,\infty}}$ as well as the norm $\|\cdot\|$ will be $\|\cdot\|_{L^2(R)}.$ First of all, we can assume by density that $\BU\in C^2(\bar R).$ For functions $f,g\in H^1(R,\mathbb R),$ denote
$
\BM_{f,g}=
\begin{bmatrix}
u_{x} & u_{y}+f\\
v_{x} & v_{y}+g
\end{bmatrix}.
$
Assume $\tilde u(x,y)$ is the harmonic part of $u$ in $R,$ i.e., it is the unique solution of the Dirichlet boundary value problem
\begin{equation}
\label{4.5}
\begin{cases}
\Delta \tilde u(x,y)=0, & (x,y)\in R\\
\tilde u(x,y)=u(x,y), & (x,y)\in \partial R.
\end{cases}
\end{equation}
The Poincar\'e inequality gives the bound $\|u-\tilde u\|\leq h\|\nabla(u-\tilde u)\|.$ Multiplying the identity
$\Delta (u-\tilde u)=u_{xx}+u_{yy}=(e_{11}(\BM_{f,g})-e_{22}(\BM_{f,g}))_{x}+(2e_{12}(\BM_{f,g}))_{y}+g_x-f_y$
by $u-\tilde u$ we get by the Schwartz inequality the bounds
\begin{equation}
\label{4.6}
\|\nabla(u-\tilde u)\|\leq C\left[\|e(\BM_{f,g})\|+h(\|f_y\|+\|g_x\|)\right],\quad \|u-\tilde u\|\leq Ch\left[\|e(\BM_{f,g})\|+h(\|f_y\|+\|g_x\|)\right].
\end{equation}
In the next step we utilize the fact that $\tilde u$ is harmonic, thus we can apply Lemma~\ref{3.1} to $\tilde u.$ First apply the triangle inequality to get $\|u_{y}+f\|^2\leq 4(\|u_{y}-\tilde u_{y}\|^2+\|\tilde u_{y}\|^2+\|f\|^2),$ and then we apply Lemma~\ref{3.1} to the summand $\|\tilde u_{y}\|^2$ first and then the triangle inequality several times (also taking into account the bounds (\ref{4.6})) to get the
estimate
\begin{align}
\label{4.7}
\|u_{y}+f\|^2&\leq C\left(\frac{1}{h}\|u\|\cdot\|e(\BM_{f,g})\|+\|u\|(\|f_y\|+\|g_x\|)+\|u\|^2+\|e(\BM_{f,g})\|^2+\|f\|^2\right).
\end{align}
For the special case $f=\alpha\cdot\BU$ and $g=\beta\cdot\BU+w$ one has the bounds
$\|f_y\|\leq C\|U\|_{H^1(R)}\leq C(\|\BM_{f,g}\|+\|\BU\|+\|w\|),$ and
$\|g_x\|\leq C\|U\|_{H^1(R)}+\|w_x\|\leq C(\|\BM_{f,g}\|+\|\BU\|+\|w_x\|),$
thus an application of the Cauchy-Schwartz inequality (involving the parameter $\epsilon$) leads (\ref{4.7}) to (\ref{4.4}).
\end{proof}
\textbf{The block $13$.} For the block $13$ we freeze the variable $\Gth$ and deal with two-variable functions. We aim to prove that for any $\epsilon>0$ the estimate holds:
\begin{equation}
\label{4.8}
\|F_{13}\|^2+\|F_{31}\|^2\leq C\left(\frac{\|u_t\|\cdot\|e(\BF)\|}{h}+\|e(\BF)\|^2+\frac{1}{\epsilon}\|\Bu\|^2+\epsilon \|F_{21}\|^2\right),
\end{equation}
where the norms are over the whole shell $\Omega.$
\begin{proof}
Indeed, it is not difficult to see that (\ref{4.8}) follows from  Lemma~\ref{lem:4.1} with the following choice: Fix $\Gth\in (0,\omega)$ and consider the displacement $\BU=(u_t,A_zu_z),$ the vector fields $\alpha=(0,-A_z\Gk_z),$ $\beta=(A_z^2\Gk_z,-A_{z,z})$ and the function $w=\frac{A_zA_{z,\Gth}}{A_\Gth}u_\Gth$ in the variables $t$ and $z$ over the thin rectangle $R=(-h/2,h/2)\times(z^1(\Gth),z^2(\Gth)).$
\end{proof}
\textbf{The block $12$.} The role of the variables $\Gth$ and $z$ is the completely the same, thus we have an analogous estimate
\begin{equation}
\label{4.9}
\|F_{12}\|^2+\|F_{21}\|^2\leq C\left(\frac{\|u_t\|\cdot\|e(\BF)\|}{h}+\|e(\BF)\|^2+\frac{1}{\epsilon}\|\Bu\|^2+\epsilon \|F_{31}\|^2\right).
\end{equation}
Consequently adding (\ref{4.8}) and (\ref{4.9}) and choosing the parameter $\epsilon>0$ small enough we discover
\begin{equation}
\label{4.10}
\|F_{12}\|^2+\|F_{21}\|^2+\|F_{13}\|^2+\|F_{31}\|^2\leq C\left(\frac{\|u_t\|\cdot\|e(\BF)\|}{h}+\|e(\BF)\|^2+\|\Bu\|^2\right).
\end{equation}
A combination of (\ref{4.1}) and (\ref{4.10}) completes the proof of the lower bound.
It remains to note that one gets (\ref{2.1}) from that with $\BF$ in place of $\nabla\Bu$ by an application of the obvious bounds
$\|\BF-\nabla\Bu\|\leq h\|\BF\|$ and $\|e(\BF)-e(\Bu)\|\leq h\|\nabla\Bu\|.$ The Ansatz realising the asymptotics of $h$ in (\ref{2.3}) and (\ref{2.4}) has been constructed in [\ref{bib:Harutyunyan.2}] and reads as follows:
\begin{equation}
\label{4.11}
\begin{cases}
u_t=W(\frac{\Gth}{\sqrt{h}},z)\\
u_\Gth=-\frac{t\cdot W_{,\Gth}\left(\frac{\Gth}{\sqrt h},z\right)}{A_\Gth{\sqrt h}}\\
u_z=-\frac{t\cdot W_{,z}\left(\frac{\Gth}{\sqrt h},z\right)}{A_z},
\end{cases}
\end{equation}
where $W(z,y)\colon\mathbb R^2\to\mathbb R$ is a smooth and periodic in $x$ function that the derivative $W_x(x,y)$ is not identically zero. The calculation is omitted here.
\end{proof}

\end{document}